\newcommand{\lipone}{\operatorname{Lip}_{1,1}}
\newcommand{\C}{\mathbb{C}}
\DeclareMathOperator{\sgn}{sgn}
\newtheorem{thm}{Theorem}
\newtheorem{coro}[thm]{Corollary}
\newtheorem*{thm*}{Theorem}
\newtheorem{definition}{Definition}
\newtheorem*{remark}{Remark}
\title{Equidistribution and $\beta$ ensembles}
\author[T. Carroll]{Tom Carroll}\address[T. Carroll]{University College 
Cork}\email{t.carroll@ucc.ie}
\author[J. Marzo]{Jordi Marzo}\address[J. Marzo]{Universitat  de 
Barcelona}\email{jmarzo@ub.edu}
\author[X. Massaneda]{Xavier Massaneda}\address[X. Massaneda]{
Universitat  de Barcelona, BGSMath}\email{xavier.massaneda@ub.edu}
\author[J. Ortega-Cerdà]{Joaquim Ortega-Cerdà}\address[J. 
Ortega-Cerdà]{Universitat  de Barcelona, BGSMath}\email{jortega@ub.edu}
\begin{document}
\maketitle
\selectlanguage{english}
\begin{abstract}
 We find the precise rate at which the empirical measure associated to a 
$\beta$-ensemble converges to its limiting measure. In our setting the 
$\beta$-ensemble is a random point process on a compact complex  manifolds 
distributed according to the $\beta$ power of a determinant of sections in a 
positive line bundle. A particular case is the spherical ensemble of generalized 
random eigenvalues of pairs of matrices with independent identically distributed 
Gaussian entries. \end{abstract}


\section{Background and setting}
Let $(X,\omega)$ be a $n$-dimensional compact complex manifold endowed with a 
smooth Hermitian metric $\omega$. Let $(L,\phi)$ be a holomorphic line bundle 
with a postive Hermitian metric $\phi$. This has to be understood as a 
collection of smooth functions $\phi_i$ defined in trivializing neighborhoods 
$U_i$ of the line bundle. If $e_i(x)$ is a frame in $U_i$, then 
$\|e_i(x)\|^2=e^{-\phi_i(x)}$. Thus $\phi_i$ must satisfy the compatibilty 
condition $\phi_i-\phi_j=\log|g_{ij}|$, where $g_{ij}$ are the transition 
functions.

As usual we denote by $H^0(L,X)$ the global holomorphic sections.
If $L$ is a line bundle over $X$ and $M$ is a line bundle over $Y$, we denote 
by $L\boxtimes M$ the line bundle over the product manifold $X\times Y$ defined 
as $L \boxtimes M= \pi_X^*(L)\otimes \pi_Y^*(M)$, where $\pi_X: X\times Y\to X$ 
is the projection onto the first factor and $\pi_Y: X\times Y\to Y$ is the 
projection onto the second. The line bundle $L \boxtimes M$ carries a metric 
induced by that of $L$ and $M$.

Given a basis $s_1,\ldots, s_N$ of $H^0(L,X)$ we define $\det(s_i(x_j))$ as a 
section of $L^{\boxtimes N}$ over $M^N$ by the identities 
$\det(s_i(x_j))=\sum_{\sigma\in S_n} \sgn(\sigma) \bigotimes_{i=1}^N 
s_i(x_{\sigma_i})$.

We fix a probability measure on $X$, given by the normalized volume form 
$\omega^n$, that we denote by $\sigma$. 

\begin{definition}\label{def1}
Let $\beta>0$. A $\beta$-ensemble is an 
$N$ point random process on $X$ which has joint distribution given by
\begin{equation}\label{distribution}
\frac 1{Z_N} \|\det s_i(x_j)\|^\beta \,d\sigma(x_1)\otimes \cdots \otimes 
d\sigma(x_N),
\end{equation} 
where $Z_N=Z_N(\beta)$ is chosen so that this is a probability distribution in 
$X^N$.
\end{definition}

Observe that the random point process is independent of the choice of basis 
$s_j$. 		

A particularly interesting case is when $\beta=2$, since then the process is 
determinantal. Let $K$ denote the Bergman kernel of the Hilbert space $H^0(L,X)$ 
endowed with the norm $\| s\| = \int_X \|s(x)\|_\phi^2 \,d\sigma(x)$. Here 
$\|\cdot\|_\phi^2$ denotes the norm induced by the metric $\phi$ (see 
Section~\ref{sec2} for more details). Then
\[
\|\det(s_i(x_j))\|^2=\|\det(K(x_i,x_j))\|.
\]

Another interesting situation occurs when $\beta\to\infty$. In this case the 
probability charges the maxima of the function $\|\det(s_i(x_j))\|$. A set of 
points $\{x_j\}_j$ with cardinality $\dim H^0(L,X)$ and maximizing this 
determinant is known as a Fekete sequence. The distribution of these sequences 
has been studied in \cite{LOC10}, \cite{JM}, and \cite{BBW11} and we will draw 
some ideas from there to study general $\beta$-ensembles.

We consider now the situation where we replace $L$ by a power $L^k$, 
$k\in\mathbb N$, and let $k$ tend to infinity. We denote by $N_k$ the dimension 
of $H^0(L^k,X)$. It is well-known, by the Riemann-Roch theorem and the Kodaira 
vanishing theorem, that
\[
 \dim H^0(L^k,X)= \frac{c_1(L)^n}{n!}  k^n + O(k^{n-1}) \sim k^n ,
\]
where $c_1(L)$ denotes the first Chern class of $L$.

For each $k$ we consider a collection of $N_k$ points chosen randomly according 
to the law \eqref{distribution}. For each $k$ the collection is picked 
independently of the previous ones. 

Given points $x_1^{(k)},\ldots,x_{N_k}^{(k)}$ chosen according to 
\eqref{distribution}, consider its associated empirical measure $\mu_k=\frac 
1{N_k} \sum \delta_{x_i^{(k)}}$. For convenience we will drop the superindex 
$(k)$ hereafter. We are interested in understanding the 
limiting distribution of the measures $\mu_k$. 

The following result is well known; see \cite{BBW11}.

\begin{thm*}[Berman, Boucksom, Witt]
\label{fekete_distribution}
Let $\mu_k$ be the empirical measure associated to a Fekete sequence for the 
bundle $ H^0(L^k,X)$. Then, as $k\to\infty$,
\begin{equation*}
\mu_k \longrightarrow \nu:=\frac{(i \partial \bar\partial \phi)^n}{\int_X 
(i \partial \bar\partial \phi)^n} 
\end{equation*}
in the weak-$*$ topology.
\end{thm*}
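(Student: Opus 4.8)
The plan is to argue variationally, testing the empirical measure $\mu_k$ against an arbitrary smooth function $\psi$ on $X$ and proving $\int_X \psi\,d\mu_k \to \int_X \psi\,d\nu$, which is exactly weak-$*$ convergence. The engine is the maximality of the Fekete configuration combined with the differentiability of a limiting energy functional. For a weight $\phi$ write
\[
\mathcal{V}_k(\phi):=\sup_{x_1,\dots,x_{N_k}\in X}\|\det s_i(x_j)\|^2,
\]
the norm being computed in the metric induced by $k\phi$ on $L^k$; the supremum is attained at a Fekete configuration. I would perturb the metric $\phi\mapsto\phi+t\psi$ for small $t$ (keeping it positive). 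Since replacing $\phi$ by $\phi+t\psi$ multiplies $\|\det s_i(x_j)\|^2$ by $e^{-kt\sum_j\psi(x_j)}$, I set
\[
F_k(t):=\frac{1}{kN_k}\log\mathcal{V}_k(\phi+t\psi)=\frac{1}{kN_k}\log\sup_{(x_j)}\Big(\|\det s_i(x_j)\|^2\,e^{-kt\sum_j\psi(x_j)}\Big).
\]
As a normalized logarithm of a supremum of exponentials of functions affine in $t$, each $F_k$ is convex.

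Next I would extract the measure from the derivative. The supremum defining $F_k(0)$ is attained precisely at Fekete configurations $(\hat x_j)$, and each configuration contributes an affine slope $-\tfrac1{N_k}\sum_j\psi(x_j)=-\int_X\psi\,d\mu_k$. By the envelope (Danskin) principle for suprema of convex families, the quantity $-\int_X\psi\,d\mu_k$ arising from a Fekete maximizer lies between the one-sided derivatives $F_k'(0^-)$ and $F_k'(0^+)$. Thus control of $\int_X\psi\,d\mu_k$ reduces to control of the derivative of $F_k$ at the origin.

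The crucial analytic input is the existence and identification of the limit $\mathcal{E}(t):=\lim_{k\to\infty}F_k(t)$. Existence follows from a Fekete-type subadditivity argument using that determinants multiply under the isomorphism $L^{k}\otimes L^{k'}\cong L^{k+k'}$; the identification of the limit as the Monge--Ampère (Aubin--Mabuchi) energy of the weight $\phi+t\psi$ is the transfinite-diameter asymptotics of Berman--Boucksom--Witt Nyström, which rests on Bergman-kernel estimates and pluripotential theory. This is the step I expect to be the main obstacle. Granting it, $\mathcal{E}$ is convex and differentiable, and the classical first-variation formula for the Monge--Ampère energy yields
\[
\mathcal{E}'(0)=-\frac{\int_X\psi\,(i\partial\bar\partial\phi)^n}{\int_X(i\partial\bar\partial\phi)^n}=-\int_X\psi\,d\nu.
\]

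Finally I would conclude by a soft convexity argument. Since convex functions converging pointwise have their one-sided derivatives converging to those of the limit at each point of differentiability, the squeeze obtained above forces
\[
\int_X\psi\,d\mu_k\longrightarrow-\mathcal{E}'(0)=\int_X\psi\,d\nu.
\]
As $\psi$ is an arbitrary smooth function and $X$ is compact, this is precisely the weak-$*$ convergence $\mu_k\to\nu$. The heart of the matter is therefore the energy asymptotics of the transfinite diameter, while the passage from maximality to the variational inequality and the convergence of derivatives are elementary consequences of convexity.
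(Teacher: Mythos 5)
The paper offers no proof of this statement at all: it is quoted as a known result with the single line ``The following result is well known; see \cite{BBW11}.'' So the only meaningful comparison is with the proof in the cited source, and your outline is essentially a faithful reconstruction of it. Berman, Boucksom and Witt Nystr\"om derive equidistribution exactly as you do: perturb the weight, use convexity of the normalized log-maximal-determinant functionals $F_k$ in $t$, squeeze $-\int_X \psi\, d\mu_k$ between the one-sided derivatives $F_k'(0^\pm)$ by the envelope principle, and pass to the limit using pointwise convergence of convex functions plus differentiability of the limiting energy. Your restriction to small $t$ with $\phi+t\psi$ still positive is also the right move: it makes the equilibrium projection trivial, so the classical first variation of the Monge--Amp\`ere (Aubin--Mabuchi) energy suffices and the deeper Berman--Boucksom differentiability theorem is not needed at this point. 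The sign bookkeeping and the convexity facts you invoke are all correct.

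Two caveats. First, the step you ``grant'' --- the identification $\lim_k F_k(t)=\mathcal E(t)$ with the Monge--Amp\`ere energy --- is not an auxiliary lemma but the principal theorem of \cite{BBW11}; what you have written is therefore a correct \emph{reduction} of equidistribution to the transfinite-diameter asymptotics (which is precisely how that paper is organized), not a self-contained proof. You do flag this honestly, so it is a scoping issue rather than a logical gap. Second, your parenthetical claim that \emph{existence} of the limit follows from ``a Fekete-type subadditivity argument using $L^{k}\otimes L^{k'}\cong L^{k+k'}$'' is too optimistic: sections of $L^{k+k'}$ are not generated by products of sections of $L^k$ and $L^{k'}$, so no elementary super/subadditivity of the determinant maxima is available; even in $\mathbb C^n$ the existence of the transfinite diameter is a nontrivial theorem of Zaharjuta, and on compact manifolds it was first established in \cite{BBW11} itself, by comparing $L^\infty$ with $L^2$ ball volumes and using Bergman kernel asymptotics. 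Since you defer that whole step to the citation anyway, this slip does not break your argument, but it mislocates where the real difficulty lies.
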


The measure $\nu$ is called the equilibrium measure.

There is a counterpart of this result for empirical measures of general 
$\beta$-ensembles (see \cite{Berman14}, which gives an estimate for the large 
deviations of the empirical measure from the equilibrium measure). 
Our aim is to obtain a different quantitative version of the weak convergence of 
the empirical measure to the equilibrium measure, measured in terms of the 
Kantorovich-Wasserstein distance between mesaures.

This sort of quantification has also been studied, with different tools, in the 
context of random matrix models,  (see for instance  \cite{MeMe1}, \cite{MeMe2} 
and \cite{MeMe14}, where similar determinantal point processes arise). 

In fact some of the $\beta$-ensembles we are considering admit random matrix 
models, at least when $\dim_{\C} (M)=1$. For instance,  Krishnapur studied in 
\cite{Krishnapur} the following point process: let $A,B$ be $k\times k$ random 
matrices with i.i.d.\ complex Gaussian entries. He proved that the generalized 
eigenvalues associated with the pair $(A,B)$, i.e.\ the eigenvalues of 
$A^{-1}B$, have joint probability density:
\begin{equation}\label{densitat}
  \frac 1{Z_k} \prod_{l= 1}^k 
\frac{1}{(1+|x_l|^2)^{k+1}}\prod_{i<j}{|x_i-x_j|^2},
\end{equation}
with respect to the Lebesgue measure in the plane.

It was also observed in \cite{Krishnapur} that, using the stereographic 
projection 
\begin{align*}
 \pi:\, & \mathbb S^2\longrightarrow \mathbb C \\
 & P_j \ \mapsto x_j ,
\end{align*}
the joint density \eqref{densitat} (with respect to 
the product area measure in the product of spheres) is 
\[
 \frac 1{Z_k} \prod_{i<j} \|P_i-P_j\|^2_{\mathbb R^3}.
\]
Since this is invariant under rotations of the sphere, the point 
process is called the spherical ensemble.  

A point process with this law had been considered earlier -- without a random 
matrix model -- by Caillol \cite{Caillol} as the model of one-component plasma. 

 One typical instance of the process is as in the picture.
\begin{figure}[h]
\includegraphics[width=12cm]{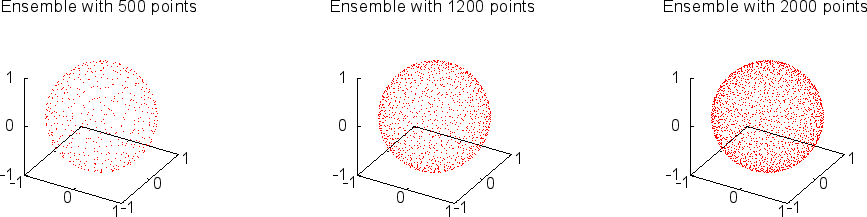}
\end{figure}

The spherical ensemble has received much attention. We mention a couple of 
properties related to our results. In \cite{Bordenave}, Bordenave proves the 
universality of the spectral distribution of the $k\times k$-matrix $A^{-1}B$ 
with respect to other i.i.d.\ random distribution of entries. As an outcome, he 
proves that the weak-* limit of the spectral measures $\mu_k=\frac 1k 
\sum_i\delta_{x_i}$, where $x_i$ are the generalized eigenvalues is 
the normalized area measure in the sphere. This convergence is rather uniform: 
in \cite{AliSade14} Alishahi and Sadegh Zamani estimate the discrepancy of the 
empirical measure with respect to its limit and give precise estimates of the 
Newtonian and the logarithmic energies.

\subsection{The Kantorovich-Wasserstein distance}
To measure the uniformity and speed of convergence of the empirical measures 
$\mu_k$ to the limiting measure $\nu$  we use the Kantorovich-Wasserstein 
distance $W$. Given probability measures $\mu$ and $\nu$, it is defined as 
\[
W(\mu,\nu)=\inf_{\rho }\iint_{X\times X} d(x,y) \, d\rho(x,y),
\]
where $d(x,y)$ is the distance associated to the metric $\omega$ and the infimum 
is taken over all admissible transport plans $\rho$, i.e., all probability 
measures in $X\times X$ with  marginal measures $\mu$ and $\nu$ respectively.

In general, the Kantorovich-Wasserstein distance is defined on probability 
measures over a compact metric space $X$, and it metrizes the weak-$*$ 
convergence of measures. 

It was observed in \cite{LOC10} that in the definition of $W$ it is possible to 
enlarge the class of admissible transport plans to complex measures $\rho$ that 
have marginals $\mu$ and $\nu$ respectively. We include the argument for the 
sake of completness.

Let 
\begin{equation}\label{alternative}
 \widetilde{W}(\mu,\nu)=\inf_{\rho\in S}  \iint_{X \times X} d(x,y) \, 
|d\rho(x,y)| ,
\end{equation}
where the infimum is now taken over the set $S$ of all complex  measures $\rho$ 
on 
$X \times X$ with marginals $\rho(\cdot,X) = \mu$ and $\rho(X,\cdot) = \nu$.

In order to see that $ \widetilde{W}(\mu,\nu)= W(\mu,\nu)$, we recall the dual 
formulation of $W$ (see \cite{Villani}*{Formula~(6.3)}):
 \begin{equation}\label{wass-def-dual}
 W(\mu,\nu)=\sup \left\{\Bigl|\int_{X} f \,d(\mu-\nu)\Bigr|: f\in \lipone(X) 
\right\},
 \end{equation}
where $\lipone(X)$ is the collection of all functions $f$  on $X$ satisfying 
$|f(x)-f(y)|\le d(x,y)$.

For any complex measure $\rho$ with marginals $\mu$ and $\nu$ and any 
$f\in \lipone(X)$ we have
\[
 \Bigl|\int_{X} f \,d(\mu-\nu)\Bigr|=\Bigl|\iint_{X\times X} (f(x)-f(y)) 
\,d\rho(x,y)\Bigr|\le \iint_{X\times X} d(x,y)\,|d\rho(x,y)|.
\]
Hence 
\[
W(\mu,\nu)\le\inf_{\rho\in S}  \iint_{X \times X} d(x,y) \, 
|d\rho(x,y)|=\widetilde{W}(\mu,\nu).
\]
The remaining inequality ($\widetilde{W}(\mu,\nu)\le W(\mu,\nu)$) is trivial.

A standard reference for basic facts on 
Kantorovich-Wasserstein distances is the book \cite{Villani}. 

\subsection{Lagrange sections}\label{sec2}
We fix now a basis of sections $s_1,\ldots s_{N_k}$ of $H^0(L^k,X)$.  
Given any collection of points $(x_1,\ldots,x_{N_k})$ we define the Lagrange
sections informally as:
\[
 \ell_j(x)=\frac{\left|\begin{smallmatrix}
 s_1(x_1)&\cdots&s_1(x)&\cdots &s_1(x_{N_k})\\ 
 \vdots&&\vdots&&\vdots\\
s_{N_k}(x_1)&\cdots&s_{N_k}(x)&\cdots &s_{N_k}(x_{N_k})\\
\end{smallmatrix}\right|}
{\left|\begin{smallmatrix}
 s_1(x_1)&\cdots&s_1(x_j)&\cdots &s_1(x_{N_k})\\ 
 \vdots&&\vdots&&\vdots\\
s_{N_k}(x_1)&\cdots&s_{N_k}(x_j)&\cdots &s_{N_k}(x_{N_k})\\
\end{smallmatrix}\right|}
\]
Clearly $\ell_j\in H^0(L^k,X)$ and $\ell_j(x_i)=0$ if $i\ne j$ and 
$\|\ell_j(x_j)\|=1$.

More formally, we proceed as in \cite{LOC10}:
if $e_j(x)$ is a frame in a neighborhood $U_j$ of the point $x_j$, then
the sections $s_i (x)$ are represented on each $U_j$ by scalar functions
$f_{ij}$ such that
$s_i (x)=f_{ij} (x) e_j(x)$. Similarly, the metric $k\phi$ is represented on 
$U_j$
by a smooth real-valued function $k\phi_j$ such that
$\| s_i (x)\|^2= |f_{ij} (x) |^2 e^{-k\phi_j (x)}$.

To construct the Lagrange sections we denote by $A$ the matrix
\[
\big ( e^{-\frac{k}{2}\phi_j (x_j)}f_{ij}(x_j)\big )_{i,j},
\] 
and define
\begin{equation*}
l_j (x) := \frac{1}{\det (A)} \sum_{i=1}^{N_k} (-1)^{i+j} A_{ij} s_i (x),
\end{equation*}
where $A_{ij}$ is the determinant of the submatrix obtained from $A$ by removing
the $i$-th row and $j$-th column. Clearly $l_j\in H^0(L^k,X)$, and it is not 
difficult
to check that $\| l_j (x_i) \| =\delta_{ij},$  $1\leq i,j\leq N_k$.

Notice that if we denote by $\rho_k(x_1,\ldots,x_{N_k})=\|\det s_i(x_j)\|$ then
\begin{equation}\label{lagra}
 \|\ell_j(x)\|^\beta=
\frac{\rho_k(x_1,\ldots,x,\ldots,x_{N_k})}{\rho_k(x_1,\ldots,x_j,\ldots,x_{N_k})
} .
\end{equation}
and thus $\mathbb E (\|\ell_j\|^\beta) \le 1$. In the case of the 
Fekete points ($\beta=\infty$), $\sup_X \|\ell_j(x)\|=1$ by definition. 

\section{Main result}
\begin{thm}\label{main}
Consider the empirical measure $\mu_k$ associated to the $\beta$-ensemble given 
in Definition~\ref{def1} and let $\nu=\frac{(i \partial \bar\partial 
\phi)^n}{\int_X (i \partial \bar\partial \phi)^n}$ be the equilibrium measure. 
Then
\[ 
 \mathbb{E}W (\mu_k,\nu)=O(1/\sqrt{k}).
\]
\end{thm}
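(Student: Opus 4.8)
The plan is to estimate $\mathbb{E}W(\mu_k,\nu)$ by exploiting the dual formulation of the Wasserstein distance together with the observation that the class of admissible transport plans may be enlarged to complex measures. The key idea, following \cite{LOC10}, is that the Lagrange sections provide a natural way to build an explicit transport plan between $\mu_k$ and a comparison measure. More precisely, for a fixed configuration $x_1,\ldots,x_{N_k}$ the Lagrange sections $\ell_j$ satisfy $\ell_j(x_i)=\delta_{ij}$, so the normalized densities $\|\ell_j(x)\|^\beta$ (suitably averaged) interpolate between the Dirac masses $\delta_{x_j}$ and a smooth measure that is close to $\nu$. I would therefore write the difference $\mu_k-\nu$ as a telescoping sum of localized contributions, one for each point, and transport each $\delta_{x_j}$ to a nearby smooth bump whose mass is spread according to $\|\ell_j\|^\beta$.

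First I would set up the comparison measure. For each $j$, let $d\eta_j=\|\ell_j(x)\|^\beta\,d\sigma(x)/\int_X\|\ell_j\|^\beta\,d\sigma$ and consider the measure $\tilde\mu_k=\frac1{N_k}\sum_j\eta_j$. The complex (indeed positive) measure on $X\times X$ given by $\frac1{N_k}\sum_j\delta_{x_j}\otimes\eta_j$ has marginals $\mu_k$ and $\tilde\mu_k$, so by the enlarged formulation $\widetilde W=W$ we obtain
\begin{equation}\label{transport-bound}
W(\mu_k,\tilde\mu_k)\le\frac1{N_k}\sum_{j=1}^{N_k}\int_X d(x_j,x)\,d\eta_j(x).
\end{equation}
The inner integral measures how concentrated $\|\ell_j\|^\beta$ is around $x_j$. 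Here I would use the Bergman kernel asymptotics: as $k\to\infty$ the Lagrange sections decay on the scale $1/\sqrt k$ away from $x_j$, reflecting the fact that $H^0(L^k,X)$ resolves points at that scale. Taking expectations and using $\mathbb E(\|\ell_j\|^\beta)\le1$ together with off-diagonal Gaussian-type decay of the Bergman kernel should yield $\mathbb E\int_X d(x_j,x)\,d\eta_j=O(1/\sqrt k)$ uniformly in $j$, whence $\mathbb EW(\mu_k,\tilde\mu_k)=O(1/\sqrt k)$.

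Second, I would control $W(\tilde\mu_k,\nu)$ in expectation. The averaged density $\frac1{N_k}\sum_j\mathbb E(\|\ell_j(x)\|^\beta)$ is, up to normalization, a smoothed version of the Bergman density, which converges to the equilibrium density of $\nu$; the rate of this convergence is again governed by the Bergman kernel expansion and contributes an error of the same order $O(1/\sqrt k)$. Combining the two estimates through the triangle inequality $\mathbb EW(\mu_k,\nu)\le\mathbb EW(\mu_k,\tilde\mu_k)+\mathbb EW(\tilde\mu_k,\nu)$ gives the claim.

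The main obstacle will be the uniform off-diagonal decay estimate for $\mathbb E(\|\ell_j(x)\|^\beta)$ at distance $r$ from $x_j$. For $\beta=2$ and the determinantal case this follows from known Bergman kernel asymptotics, but for general $\beta$ one cannot directly use the determinantal structure; instead I expect one must exploit the representation \eqref{lagra} of $\|\ell_j\|^\beta$ as a ratio of determinants and integrate against the distribution \eqref{distribution}, reducing the expectation to a ratio of partition functions $Z_N$. Controlling this ratio so as to extract genuine Gaussian decay on the $1/\sqrt k$ scale—uniformly over all configurations and all $j$—is the delicate point, and is where the bulk of the technical work should lie.
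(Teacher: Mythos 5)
Your proposal assembles the right ingredients---Lagrange sections, the identity \eqref{lagra}, off-diagonal decay of the Bergman kernel at scale $1/\sqrt{k}$, complex transport plans---but the specific decomposition you choose creates two gaps that are not technicalities, and the paper's proof is organized precisely to avoid both. The first gap is in your bound for $\mathbb{E}W(\mu_k,\tilde\mu_k)$: your cost $\frac1{N_k}\sum_j\int d(x_j,x)\,d\eta_j(x)$ contains no Bergman kernel factor, so all localization must come from decay of $\|\ell_j(x)\|^\beta$ away from $x_j$, ``uniformly over all configurations'' as you say. No such uniform decay exists: for configurations where points cluster, the denominator determinant in \eqref{lagra} is nearly singular and the Lagrange sections are large far from $x_j$; and the bound $\mathbb{E}(\|\ell_j\|^\beta)\le 1$ gives averaging, not localization, and in any case cannot be combined with the random normalization $\int_X\|\ell_j\|^\beta\,d\sigma$ sitting in the denominator of $\eta_j$ (an expectation of a ratio). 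Indeed, if you apply \eqref{lagra} to both numerator and denominator, your cost becomes the expected distance between $x_j$ and a point \emph{resampled from the conditional law of the $j$-th coordinate given the others}---a rigidity statement about the $\beta$-ensemble essentially as hard as the theorem itself. The paper's plan is different in exactly this respect: it transports between $\mu_k$ and the \emph{deterministic} Bergman measure $b_k=\frac1{N_k}K_k(x,x)\,d\sigma$ via the complex plan
\[
p(x,y)=\frac1{N_k}\sum_{j=1}^{N_k}\delta_{x_j}(y)\,\langle K_k(x,x_j),\ell_j(x)\rangle\,d\sigma(x),
\]
whose marginals are correct by the reproducing property. Here the factor $|K_k(x,x_j)|$ supplies deterministic, configuration-independent localization (Berndtsson's pointwise bound, which yields \eqref{offdiagonal}), while the random factor $|\ell_j(x)|^\beta$ is eliminated \emph{exactly}: after H\"older, the identity \eqref{lagra} converts $\|\ell_j(x)\|^\beta\rho_k(x_1,\ldots,x_j,\ldots,x_{N_k})$ into $\rho_k(x_1,\ldots,x,\ldots,x_{N_k})$, and one then integrates out $x_j$, which no longer appears in the density, using \eqref{offdiagonal} a second time. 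No pointwise control of $\ell_j$ is ever needed. (Note also that this plan is genuinely complex-valued, which is why the enlargement $\widetilde W=W$ is required; your plan is positive, so you cite that enlargement but never use it.)

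The second gap is in your estimate of $\mathbb{E}W(\tilde\mu_k,\nu)$. The comparison measure $\tilde\mu_k$ is random, and what you propose to control is the density of its \emph{expectation}, $\frac1{N_k}\sum_j\mathbb{E}(\|\ell_j(x)\|^\beta)$. Since $W$ is jointly convex, Jensen's inequality gives $\mathbb{E}W(\tilde\mu_k,\nu)\ge W(\mathbb{E}\tilde\mu_k,\nu)$, so a bound on $W(\mathbb{E}\tilde\mu_k,\nu)$ says nothing about the quantity you need; fluctuations of $\tilde\mu_k$ about its mean are exactly what is not controlled. The paper has no such step because its intermediate measure $b_k$ is deterministic: the comparison $W(b_k,\nu)=O(1/\sqrt{k})$ follows from the Tian--Catlin--Zelditch expansion, which bounds the total variation \eqref{tvariation} and hence the Wasserstein distance. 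To repair your argument you would either have to prove a concentration statement for $\tilde\mu_k$ (circular, since equidistribution is the goal) or replace $\tilde\mu_k$ by a deterministic measure---which is what the paper's choice of $b_k$, together with the kernel-weighted complex plan, accomplishes in one stroke.
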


\begin{remark}
The rate of convergence cannot be improved. Let $\sigma$ be any nowhere 
vanishing smooth probability distribution on $X$. Let $E_k$ be any discrete set on 
$X$ with cardinality $\# E_k\simeq k^n \simeq N_k$, and let  $\mu_k=\frac 1{\# 
E_k}\sum_{x\in E_k} \delta_x$. Then the distance $W(\mu_k,\sigma)\geq   
1/\sqrt{k}$.

To obtain a lower bound for $ W(\mu_k,\sigma)$ we use the dual formulation  of 
the Kantorovich-Wasserstein distance \eqref{wass-def-dual} and the function 
$f(x)=d(x,E_k)$, which is in $\lipone(X)$. Since $d(x,E_k)=0$ on the support of 
$\mu_k$ we obtain
\[
 W(\mu_k,\sigma)\ge \int_{X}  d(x,E_k)\,d\sigma.
\]
Vitali's covering lemma ensures that for each $k$  and for some $\varepsilon$ 
small enough, independent of $k$, there are at least $2 \#E_k$ pairwise disjoint 
balls of radius $\varepsilon/\sqrt{k}$. Since the number of balls is twice the 
number of points in $E_k$, at least half the balls contain no point 
of $E_k$. We consider one such ball, $B(x_i,\varepsilon/\sqrt{k})$. In the 
smaller ball $B(x_i,0.5\varepsilon/\sqrt{k})$ we have $d(x,E_k)\ge 
0.5\varepsilon/\sqrt{k}$. Thus

\begin{align*}
 \int_{X}d(x,E_k)\,d\sigma 
 	&\ge \sum_i \int_{B(x_i,\varepsilon/\sqrt{k})} d(x,E_k)\,d\sigma 
 	\gtrsim \sum_i \frac 1{\sqrt{k}}\sigma\big( B(x_i, \varepsilon/\sqrt{k}) \big)\\
 &\gtrsim \#E_k \frac 1{\sqrt{k}} k^{-n}\simeq \frac 1{\sqrt{k}}.
 \end{align*}

\end{remark}

\begin{proof}[Proof of Theorem~\ref{main}]
To prove this we provide a (complex) transport plan between the probability 
measure $b_k(x)=\frac 1{N_k} K_k(x,x)$ -- $b_k$ stands for \emph{Bergman 
measure} -- and the empirical measure $\mu_k$. We are going to prove that 
\[
\mathbb{E}W(\mu_k,b_k)=O\Big(\frac 1{\sqrt{k}}\Big)\ .
\]
Standard estimates for the Bergman kernel provide:
\[
W(b_k,\nu)=O\Big(\frac 1{\sqrt{k}}\Big).
\]
Actually one can prove that the total variation 
(which dominates the Kantorovich-Wasserstein distance) satisfies:
\begin{equation}\label{tvariation}
 \left\|\frac{K_k(x,x)}{N_k} -\nu \right\|\le
\frac{C}{\sqrt{k}}.
\end{equation}
This follows for instance by the expansion in powers of $1/k$ of the Bergman 
kernel.  In this context this is due to Tian, Catlin and Zelditch, 
\cites{Tian90,Catlin97,Zelditch98}.

In the particular case of the spherical ensemble, the kernel is explicit and 
invariant under rotations, and the estimate is even better: the Bergman measure 
is the equilibrium measure, i.e. $b_k=\nu$.

Consider the transport plan
\[
 p(x,y)=\frac 1{N_k}\sum_{j=1}^{N_k} \delta_{x_j}(y)\langle 
K_k(x,x_j),\ell_j(x)\rangle \,d\sigma(x).
\]
It has the correct marginals -- $b_k$ and $\mu_k$ respectively -- and thus
\begin{align*}
 W(b_k,\mu_k)&\le \iint_{X\times X} d(x,y) \,d|p|(x,y)\\ 
 &\leq \frac  1{N_k}\sum_{j=1}^{N_k}  \int_X  d(x,x_j) |\ell_j(x)| 
|K_k(x,x_j)|\,d\sigma(x).
\end{align*}
Now, letting $\beta'$ be the conjugate exponent of $\beta$ (so that 
$1/\beta+1/\beta' =1$), we have
\begin{multline*}
 (\mathbb{E} W)^\beta \le\\
\leq \int_{X^{N_k}}  \frac 1{N_k}\sum_{j=1}^{N_k}\left(
\int_{X}
 d(x,x_j) |\ell_j(x)| |K_k(x,x_j)|d\sigma(x)
\right)^\beta\rho_k(x_1,\ldots,x_{N_k}) \\
\int_{X^{N_k}}  \frac 1{N_k}\sum_{j=1}^{N_k}
\left(
\int_{X}
 d(x,x_j) |K_k(x,x_j)|\right)^{\beta/\beta'}
 \left(
\int_{X}
|\ell_j(x)|^\beta|K_k(x,x_j)|d(x,x_j)\right)\rho_k(x_1,\ldots,x_{N_k}).
\end{multline*}

Assume for the moment that the following off-diagonal decay of the Bergman 
kernel holds:
\begin{equation}\label{offdiagonal}
 \sup_{y\in X}\int_{X}  d(x,y) |K_k(x,y)|\,d\sigma(x)\le
\frac C{\sqrt{k}}.
\end{equation}
Then, by \eqref{lagra}, we obtain:
\begin{align*}
(\mathbb{E} W)^\beta&\le \left(\frac C{\sqrt{k}}\right)^{\beta/\beta'} 
\int_{X}  \frac 
1{N_k}\sum_{j=1}^{N_k}
\int_{X}
|\ell_j(x)|^\beta|K_k(x,x_j)|d(x,x_j)\rho_k(x_1,\ldots,x_j,\ldots,x_{N_k})\\
&=\left(\frac C{\sqrt{k}}\right)^{\beta/\beta'}  \int_{X^{N_k}} 
\frac 1{N_k}\sum_{j=1}^{N_k}
\int_{X}
|K_k(x,x_j)|d(x,x_j)\rho_k(x_1,\ldots,x,\ldots,x_{N_k}).
\end{align*}
Finally, integrating first in $x_j$ and applying again \eqref{offdiagonal}  we 
obtain 
 \[
  (\mathbb{E} W)^\beta\le \Bigl(\frac C{\sqrt{k}}\Bigr)^{\beta/\beta'} 
\Bigl(\frac C{\sqrt{k}}\Bigr)
  = \textrm{O}\Bigl(\frac  1{\sqrt{k}}\Bigr)^{\beta},
 \]
 as desired.

Estimate \eqref{offdiagonal}  follows from the pointwise  estimate for the 
Bergman kernel 
\begin{equation}\label{offidagonal}
 |K_k(x,y)|\le C N_k e^{-C \sqrt{k}\, d(x,y)},
\end{equation}
which holds when the line bundle is positive, see \cite{Berndtsson03}. 

Indeed, consider the function $h(s)=s e^{-C \sqrt{k} s}$ strictly decreasing in 
$\big[\frac{1}{C \sqrt{k}},+\infty\big).$  For any $y\in X$ we bound the 
integral in (\ref{offdiagonal}) as
\begin{align*}
\int_{X} &  d(x,y) |K_k(x,y)|\,d\sigma(x)\lesssim  \int_0^{+\infty} 
\sigma\left( \{x\in X : h(d(x,y))>s\} \right) \,ds  
\\
&
\lesssim N_k \int_{(C \sqrt{k})^{-1}}^{+\infty} |h'(s)| \sigma\left( \{x\in X : 
d(x,y)<s\} \right) \,ds\lesssim \frac{1}{\sqrt{k}},
\end{align*}
where the last estimate follows from $\sigma(B(y,s))\lesssim s^{2n}$ and 
$N_k\sim k^n.$

In the particular case of the spherical ensemble, the kernel is explicit and the 
decay is even faster:

\begin{align*}
|K_k(z,w)|^2&={k^2}\left(1-\frac{|z-w|^2}{(1+|z|^2)(1+|w|^2)}\right)^{k-1}\\
&\le K k^2 \exp\left(-C k \frac{|z-w|^2}{(1+|z|^2)(1+|w|^2)}\right)=\\
&=K k^2 \exp\left( -C k\, d(z,w)^2\right),
\end{align*}
where here $d(z,w)$ coincides with the chordal metric.
\end{proof}

\section{The determinantal setting}

Now we turn our attention to the almost sure convergence of the empirical
measure. Using the fact that Lipschitz functionals of determinantal
process concentrate the measure around the mean we prove the following result.

\begin{coro}
 If $\mu_k$ is the empirical measure associated with the determinantal point 
process given by \eqref{distribution} with $\beta=2$, and $\nu$ denotes the 
equilibrium measure, then 
\begin{itemize}
 \item If  $\dim_{\mathbb C}(M)>1$ then $W(\mu_k,\nu)=O(1/\sqrt{k})$ almost surely.
 \item If $\dim_{\mathbb  C}(M)=1$ then
$W(\mu_k,\nu)=O(\log k/\sqrt{k})$ almost surely.
\end{itemize}
In particular, any realization of the spherical ennsemble 
satisfies  $W(\mu_k,\nu)=O(\log k/\sqrt{k})$ almost surely.
\end{coro}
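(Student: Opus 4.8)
The plan is to leverage the concentration of measure phenomenon for Lipschitz functionals of determinantal point processes. The key observation is that the Kantorovich-Wasserstein distance $W(\mu_k,\cdot)$ is itself a Lipschitz functional of the random configuration $(x_1,\ldots,x_{N_k})$. More precisely, I would first show that if we perturb the empirical measure by moving a single point $x_j$ to a new location, then $W$ changes by at most $\frac{C}{N_k}\cdot\operatorname{diam}(X)$, since moving one of $N_k$ atoms changes $\mu_k$ by at most $\frac{2}{N_k}$ in total variation and each unit of mass travels a bounded distance on the compact manifold. This identifies $W(\mu_k,\nu)$ as a function on $X^{N_k}$ with Lipschitz constant of order $1/N_k$ with respect to the natural (Hamming-type or $\ell^1$) metric.

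Next I would invoke the concentration inequality for Lipschitz functions of determinantal processes. The relevant tool is that the number of points of a determinantal process in any region, and more generally Lipschitz linear statistics, concentrate exponentially around their expectations with variance controlled by the trace of the kernel. Combining the Lipschitz bound above with such a concentration estimate gives a tail bound of the form
\[
\mathbb{P}\bigl(|W(\mu_k,\nu)-\mathbb{E}W(\mu_k,\nu)|>t\bigr)\le C\exp\bigl(-c\,N_k^2\,t^2/V_k\bigr),
\]
where $V_k$ is a variance proxy. From Theorem~\ref{main} we already know $\mathbb{E}W(\mu_k,\nu)=O(1/\sqrt{k})$, so it suffices to control the fluctuations at the same scale.

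The distinction between the two cases comes from the size of $V_k$, which depends on $\dim_{\mathbb C}(M)=n$ through the local scaling of the Bergman kernel. When $n>1$ the variance is small enough that choosing $t$ comparable to $1/\sqrt{k}$ already gives a summable tail, so the first Borel--Cantelli lemma yields $W(\mu_k,\nu)=O(1/\sqrt{k})$ almost surely along the full sequence $k\to\infty$ (recalling that the configurations for distinct $k$ are chosen independently). When $n=1$ the variance is borderline, and to make the tail probabilities summable one must allow an extra logarithmic factor, taking $t\simeq \log k/\sqrt{k}$; this is exactly the source of the weaker rate in the planar case and explains the $O(\log k/\sqrt{k})$ bound for the spherical ensemble.

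The main obstacle I anticipate is establishing the concentration inequality with the correct dependence on the variance proxy $V_k$, and in particular pinning down the logarithmic threshold in the one-dimensional case. The Lipschitz-perturbation estimate for $W$ is elementary, and the expectation bound is already in hand; the delicate step is quoting or deriving a sufficiently sharp concentration statement for determinantal processes (such as the Pemantle--Peres or Hough--Krishnapur--Peres--Vir\'ag type bounds) and verifying that the trace-of-kernel variance proxy scales like $k^{n}$ in a way that makes the Borel--Cantelli argument succeed with the stated rates. One must also check that the independence of the configurations across different $k$ legitimately allows the almost-sure conclusion along the whole sequence.
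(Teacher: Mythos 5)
Your proposal follows essentially the same route as the paper: the paper also treats $\sigma \mapsto N_k W(\sigma/N_k,\nu)$ as a Lipschitz-$1$ functional (with respect to total variation on counting measures), applies exactly the Pemantle--Peres concentration inequality for determinantal processes, combines it with the expectation bound $\mathbb{E}W(\mu_k,\nu)=O(1/\sqrt{k})$ from Theorem~\ref{main}, and concludes by Borel--Cantelli, with the dimension dichotomy arising because the exponent scales like $N_k t^2 \sim k^{n-1}$ at $t\sim 1/\sqrt{k}$, forcing the logarithmic correction only when $n=1$. Your Gaussian-form tail with variance proxy $V_k\sim N_k$ is precisely what the Bernstein-type Pemantle--Peres bound reduces to in the relevant regime $t\lesssim 1$, so the plan is sound (and note that the first Borel--Cantelli lemma requires no independence across $k$, so that final concern is moot).
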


Let $\nu$ be, as before, the normalized equilibrium measure. Let us define the 
functional $f$ on the set of measures of the form 
$\sigma=\sum_{i=1}^n\delta_{x_i}$  by
\[
 f(\sigma)=n W \Big(\frac \sigma{n} ,\nu\Big). 
\]
As the Kantorovich-Wasserstein distance is controlled by the total variation, 
$f$ is a Lipschitz functional with Lipschitz norm one with respect to the total
variation distance. Here we use the following
result of Pemantle and Peres \cite{PePe14}*{Theorem 3.5}.

\begin{thm*}[Pemantle-Peres]
 Let $Z$ be a determinantal point process of $N$ points. Let $f$ be a 
Lipschitz-1 functional defined in the set of finite counting measures (with 
respect to the total variation distance). Then
\[
\mathbb P (f-\mathbb E f\ge a)\le 3 \exp\left(-\frac{a^2}{16(a+2N)}\right)
\]
\end{thm*}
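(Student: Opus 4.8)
The plan is to prove the inequality by the classical route for concentration of Lipschitz functionals --- a Doob martingale together with a Bernstein-type martingale inequality (Freedman's inequality) --- but with the crucial twist that the \emph{effective variance} of the martingale must be controlled by $N$, the number of points, rather than by the size of the ambient ground set. For a genuinely independent model a bounded-differences (Azuma--McDiarmid) argument would only produce a variance proportional to the number of sites, which is useless here since the manifold $X$ is a continuum. The whole content of the theorem is that the determinantal hypothesis --- equivalently, that the law is a \emph{strong Rayleigh} measure --- forces strong negative dependence, and this is exactly what collapses the variance down to order $N$.

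First I would expose the random configuration one piece at a time in order to build a martingale. Discretizing $X$ into a fine partition (or, equivalently, sampling the $N$ points sequentially), write $\xi_1,\xi_2,\dots$ for the successive occupation variables, let $\mathcal{F}_i$ be the $\sigma$-algebra they generate up to step $i$, and set $M_i=\mathbb{E}[f\mid\mathcal{F}_i]$, so that $M_0=\mathbb{E}f$, the terminal value is $f$, and $D_i:=M_i-M_{i-1}$ are the increments. Everything then reduces to feeding Freedman's inequality
\[
\mathbb{P}\big(f-\mathbb{E}f\ge a,\ \langle M\rangle\le \sigma^2\big)\le \exp\!\Big(-\frac{a^2}{2(\sigma^2+Ra/3)}\Big),\qquad \langle M\rangle=\sum_i\mathbb{E}[D_i^2\mid\mathcal{F}_{i-1}],
\]
with a uniform increment bound $|D_i|\le R$ and a bound $\langle M\rangle\le\sigma^2$ on the predictable quadratic variation.

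Both bounds come from the \emph{stochastic covering property} enjoyed by determinantal (strong Rayleigh) measures: conditioning on $\{\xi_i=1\}$ versus $\{\xi_i=0\}$, the two conditional laws can be coupled so that the corresponding configurations differ by at most a single point. Since $f$ is Lipschitz-$1$ for the total-variation distance and moving one atom changes the counting measure by bounded total variation, this coupling gives $|g_i^{(1)}-g_i^{(0)}|\le 2$, where $g_i^{(b)}=\mathbb{E}[f\mid\mathcal{F}_{i-1},\xi_i=b]$, hence $|D_i|\le 2=:R$. Writing $p_i=\mathbb{P}(\xi_i=1\mid\mathcal{F}_{i-1})$, the conditional variance of a two-valued increment is $p_i(1-p_i)(g_i^{(1)}-g_i^{(0)})^2\le 4p_i$, so $\langle M\rangle\le 4\sum_i p_i$. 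The decisive point is that $\sum_i p_i$ is the predictable compensator of the total number of points, hence has expectation exactly $N$ and concentrates around $N$; bounding it by $2N$ off an exceptional event of small probability gives $\sigma^2=8N$. With $R=2$ and $\sigma^2=8N$, Freedman's bound reads $\exp(-a^2/(16N+4a/3))$, which is dominated by $\exp(-a^2/(16(a+2N)))$ since $16N+4a/3\le 16(a+2N)$; the leftover prefactor $3$ absorbs the small-probability event on which $\langle M\rangle$ exceeds $8N$.

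The main obstacle is the stochastic covering coupling itself: proving that, for a determinantal (strong Rayleigh) law, conditioning on the status of one coordinate displaces only a single point in a suitable coupling. This is the one place where the determinantal structure is used in an essential, non-cosmetic way, and it is precisely what allows the variance to read $N$ rather than the (here infinite) number of sites. Once it is in place, the increment bound, the quadratic-variation estimate $\langle M\rangle=O(N)$, and the passage through Freedman's inequality are routine bookkeeping.
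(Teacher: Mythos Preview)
The paper does not prove this statement at all: it is quoted verbatim from Pemantle--Peres \cite{PePe14}*{Theorem~3.5} and used as a black box to deduce the almost-sure Corollary. So there is no ``paper's own proof'' to compare against.

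That said, your sketch is essentially the strategy of the original Pemantle--Peres argument: expose the configuration one site at a time to form a Doob martingale, use the \emph{strong Rayleigh} property of determinantal measures to obtain the stochastic covering coupling (so that flipping one revealed bit moves at most one point, giving $|D_i|\le 2$), bound the predictable quadratic variation by a multiple of the total number of points $N$, and feed everything into Freedman's inequality. You have correctly located the heart of the matter in the stochastic covering property; that is precisely the step where the determinantal/strong Rayleigh hypothesis does the real work, and it is what collapses the effective variance from ``number of sites'' to ``number of points''.

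One place where your write-up is loose: the control of $\langle M\rangle$. You assert that $\sum_i p_i$ ``has expectation exactly $N$ and concentrates around $N$; bounding it by $2N$ off an exceptional event of small probability'' and then absorb that exceptional event into the prefactor $3$. This is the right shape, but as stated it is circular: to show that the compensator $\sum_i p_i$ concentrates you would again need a martingale/Bernstein estimate of the very type you are proving. In the original argument this is handled more carefully (one option is to run Freedman on the counting martingale $S_m-\sum_{i\le m}p_i$ first, using only the crude bound $p_i\le 1$, to get the tail of $\sum_i p_i$; another is to organise the exposure so that the compensator is deterministically bounded). If you want your sketch to stand on its own, that step needs a sentence or two more.
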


Take now $a=10 \alpha_k N_k/\sqrt{k}$, where $\alpha_k=\sqrt{\log{k}}$ for 
$n=1$ and $\alpha_k=1$ for $n>1$. Then
\begin{align*}
\mathbb P \Bigl(W(\mu_k,\nu)> \frac{11\alpha_k}{\sqrt{k}}\Bigr)
&\le\mathbb P \Bigl(N_k W(\mu_k,\nu)> N_k \mathbb E W(\mu_k,\mu)+
10\alpha_k\frac{N_k}{\sqrt{k}}\Bigr)\\ 
&\le 3\exp\left(-\frac{100 \alpha_k^2
N_k^2/k}{16(10\alpha_k N_k/\sqrt{k}	+2N_k)}\right)\\
&\lesssim
\exp(-\alpha_k^2 N_k/k)\lesssim\frac1{k^2}.
\end{align*}

Finally, a standard application of the Borel-Cantelli lemma shows that, with
probability one,
\[
 W(\mu_k,\nu)\le \frac{11\sqrt{\alpha_k}}{\sqrt{k}}.
\]

\begin{bibdiv}
\begin{biblist}

\bib{AliSade14}{article}{
   author={Alishahi, Kasra},
   author={Zamani, Mohammadsadegh},
   title={The spherical ensemble and uniform distribution of points on the
   sphere},
   journal={Electron. J. Probab.},
   volume={20},
   date={2015},
   pages={no. 23, 1--27},
   issn={1083-6489},
   doi={10.1214/EJP.v20-3733},
}
\bib{Berndtsson03}{article}{
   author={Berndtsson, Bo},
   title={Bergman kernels related to Hermitian line bundles over compact
   complex manifolds},
   conference={
      title={Explorations in complex and Riemannian geometry},
   },
   book={
      series={Contemp. Math.},
      volume={332},
      publisher={Amer. Math. Soc., Providence, RI},
   },
   date={2003},
   pages={1--17},
   doi={10.1090/conm/332/05927},
}

\bib{Berman14}{article}{
    AUTHOR = {Berman, Robert J.},
     TITLE = {Determinantal point processes and fermions on complex
              manifolds: large deviations and bosonization},
   JOURNAL = {Comm. Math. Phys.},
    VOLUME = {327},
      YEAR = {2014},
    NUMBER = {1},
     PAGES = {1--47},
      ISSN = {0010-3616},
       DOI = {10.1007/s00220-014-1891-6},
       URL = {http://dx.doi.org/10.1007/s00220-014-1891-6},
}

\bib{BBW11}{article}{
    AUTHOR = {Berman, Robert},
    author = {Boucksom, S{\'e}bastien},
    author = {Witt Nystr{\"o}m, David},
     TITLE = {Fekete points and convergence towards equilibrium measures on
              complex manifolds},
   JOURNAL = {Acta Math.},
    VOLUME = {207},
      YEAR = {2011},
    NUMBER = {1},
     PAGES = {1--27},
      ISSN = {0001-5962},
       DOI = {10.1007/s11511-011-0067-x},
       URL = {http://dx.doi.org/10.1007/s11511-011-0067-x},
}

\bib{Bordenave}{article}{
    AUTHOR = {Bordenave, Charles},
     TITLE = {On the spectrum of sum and product of non-{H}ermitian random
              matrices},
   JOURNAL = {Electron. Commun. Probab.},
    VOLUME = {16},
      YEAR = {2011},
     PAGES = {104--113},
      ISSN = {1083-589X},
       DOI = {10.1214/ECP.v16-1606},
       URL = {http://dx.doi.org/10.1214/ECP.v16-1606},
}

\bib{Caillol}{article}{
    AUTHOR = {Caillol, J.M.},
     TITLE = {Exact results for a two-dimensional one-component plasma on a 
	      sphere},
   JOURNAL = {J. Physique--Lettres},
    VOLUME = {42},
      YEAR = {1981},
    NUMBER = {12},
     PAGES = {245--247},
       URL = {http://dx.doi.org/10.1214/08-AOP404},
}

\bib{Catlin97}{article}{
   author={Catlin, David},
   title={The Bergman kernel and a theorem of Tian},
   conference={
      title={Analysis and geometry in several complex variables},
      address={Katata},
      date={1997},
   },
   book={
      series={Trends Math.},
      publisher={Birkh\"auser Boston, Boston, MA},
   },
   date={1999},
   pages={1--23},
}

\bib{Krishnapur}{article}{
    AUTHOR = {Krishnapur, Manjunath},
     TITLE = {From random matrices to random analytic functions},
   JOURNAL = {Ann. Probab.},
    VOLUME = {37},
      YEAR = {2009},
    NUMBER = {1},
     PAGES = {314--346},
      ISSN = {0091-1798},
       DOI = {10.1214/08-AOP404},
       URL = {http://dx.doi.org/10.1214/08-AOP404},
}

\bib{LOC10}{article}{
    AUTHOR = {Lev, Nir},
    AUTHOR = {Ortega-Cerd\`a, Joaquim},
    YEAR   ={2010},
     TITLE = {Equidistribution estimates for Fekete points on
complex manifolds},
   JOURNAL = {to appear in JEMS, arXiv:1210.8059 [math.CV]},
       URL ={http://arxiv.org/abs/1210.8059},
}

\bib{JM}{article}{
    AUTHOR = {Marzo, Jordi},
    AUTHOR = {Ortega-Cerd{\`a}, Joaquim},
     TITLE = {Equidistribution of {F}ekete points on the sphere},
   JOURNAL = {Constr. Approx.},
    VOLUME = {32},
      YEAR = {2010},
    NUMBER = {3},
     PAGES = {513--521},
      ISSN = {0176-4276},
       DOI = {10.1007/s00365-009-9051-5},
       URL = {http://dx.doi.org/10.1007/s00365-009-9051-5},
}

\bib{MeMe1}{article}{
    AUTHOR = {Meckes, Elizabeth S},
    AUTHOR = {Meckes, Mark W.},
     TITLE = {Spectral measures of powers of random matrices},
   JOURNAL = {Electron. Commun. Probab.},
    VOLUME = {18},
      YEAR = {2013},
     PAGES = {no. 78, 13},
      ISSN = {1083-589X},
       DOI = {10.1214/ECP.v18-2551},
       URL = {http://dx.doi.org/10.1214/ECP.v18-2551},
}

\bib{MeMe2}{article}{		
    AUTHOR = {Meckes, Elizabeth S. and Meckes, Mark W.},
     TITLE = {Concentration and convergence rates for spectral measures of
              random matrices},
   JOURNAL = {Probab. Theory Related Fields},
    VOLUME = {156},
      YEAR = {2013},
    NUMBER = {1-2},
     PAGES = {145--164},
      ISSN = {0178-8051},
       DOI = {10.1007/s00440-012-0423-6},
       URL = {http://dx.doi.org/10.1007/s00440-012-0423-6},
}

\bib{MeMe14}{article}{
    AUTHOR = {Meckes, Elizabeth S.},
    AUTHOR = {Meckes, Mark W.},
    YEAR   ={2014},
     TITLE = {A rate of convergence for the circular law for the complex
	      {G}inibre ensemble},
   JOURNAL = {arXiv:1406.1396v1 [math.PR]},
       URL ={http://arxiv.org/abs/1406.1396v1},
}

\bib{PePe14}{article}{
    AUTHOR = {Pemantle, Robin},
    AUTHOR = {Peres, Yuval},
     TITLE = {Concentration of {L}ipschitz functionals of determinantal and
              other strong {R}ayleigh measures},
   JOURNAL = {Combin. Probab. Comput.},
    VOLUME = {23},
      YEAR = {2014},
    NUMBER = {1},
     PAGES = {140--160},
      ISSN = {0963-5483},
       DOI = {10.1017/S0963548313000345},
       URL = {http://dx.doi.org/10.1017/S0963548313000345},
}

\bib{Tian90}{article}{
   author={Tian, Gang},
   title={On a set of polarized K\"ahler metrics on algebraic manifolds},
   journal={J. Differential Geom.},
   volume={32},
   date={1990},
   number={1},
   pages={99--130},
   issn={0022-040X},
}

\bib{Villani}{book}{
    AUTHOR = {Villani, C{\'e}dric},
     TITLE = {Optimal transport},
    SERIES = {Grundlehren der Mathematischen Wissenschaften [Fundamental
              Principles of Mathematical Sciences]},
    VOLUME = {338},
      NOTE = {Old and new},
 PUBLISHER = {Springer-Verlag, Berlin},
      YEAR = {2009},
     PAGES = {xxii+973},
      ISBN = {978-3-540-71049-3},
       DOI = {10.1007/978-3-540-71050-9},
       URL = {http://dx.doi.org/10.1007/978-3-540-71050-9},
}
		
\bib{Zelditch98}{article}{
    AUTHOR = {Zelditch, Steve},
     TITLE = {Szeg{\H o} kernels and a theorem of {T}ian},
   JOURNAL = {Internat. Math. Res. Notices},
      YEAR = {1998},
    NUMBER = {6},
     PAGES = {317--331},
      ISSN = {1073-7928},
       DOI = {10.1155/S107379289800021X},
       URL = {http://dx.doi.org/10.1155/S107379289800021X},
}

\end{biblist}
\end{bibdiv}

\end{document}